\documentclass[11pt,a4paper]{article}
\usepackage{amsmath,amssymb,amsthm}
\usepackage{graphicx}
\usepackage{hyperref}
\usepackage{accents}
\usepackage{tikz}
\usetikzlibrary{
    calc,
    intersections,
    angles,
    quotes,
    arrows.meta,
    decorations.markings
}
\usepackage{tikz-cd}
\newcommand{\MG}{\underaccent{\longsim}{\mathcal{M}}} % Minimal Geometry
\newcommand{\longsim}{\mathrel{\scalebox{1.3}[0.8]{\ensuremath{\sim}}}}
\newcommand{\PerpMark}[5]{%
  \coordinate (PM) at ($(#1)!#3!(#2)$);
  \draw[#5]
    ($(PM)!#4!90:(#2)$) --
    ($(PM)!#4!-90:(#2)$);
}
\theoremstyle{definition}
\newtheorem{thm}{Theorem}[section]
\newtheorem{cor}[thm]{Corollary}
\newtheorem{rem}[thm]{Remark}

\title{A Hypotenuse-Angle Axiomatization of the Hilbert Plane}
\author{Roberto Volpe}
\date{September 25, 2026}

\begin{document}
\maketitle

\begin{abstract}
We show that, in $\MG^-$ --- $\MG$, the Hilbert plane, deprived of the
Side-Angle-Side axiom --- a hypotenuse-angle criterion for right
triangles, together with the existence of a perpendicular from an
external point to a line and a ray correspondence principle governing
the ordering of angles, suffices to reconstruct SAS in full 
--- and hence, as standard consequences, SSS
and SAA as well. This offers a more elementary alternative to Hilbert's
own choice of SAS as an axiom, confined throughout to right triangles and
provable without any continuity assumption. Several recent papers have
proposed related alternatives on comparable grounds; we compare our
approach to theirs in the concluding remarks. The result builds on, and
is best read alongside, two earlier papers of ours on related
reconstructions of SAS \cite{Volpe2026I,Volpe2026II}.
\end{abstract}

\section{Introduction}

Hilbert's choice to adopt SAS as an axiom \cite{Hilbert1950}, rather than
derive it, has recently been revisited on grounds of intuitiveness.
Euclid's own proof of the analogous result (\emph{Elements} I.4) proceeds
by superposition, a method never stated among his postulates or common
notions, and one that Hilbert avoided by positing the congruence criterion
directly.

Several recent contributions have proposed alternatives. H\"ahl and Peters
\cite{HaehlPeters2022} replace Hilbert's angle-transport and SAS axioms
(III4, III5) with three weaker-looking axioms, of which the central one,
(CT), directly postulates the existence of a triangle congruent to a given
one on a prescribed side of a segment --- in effect, a direct
axiomatization of Euclid's superposition. Johnson \cite{Johnson2025}
instead builds absolute geometry from two axioms concerning right angles
alone, from which full rigidity and SAS-equivalent results are recovered
without continuity. Edwards and Pambuccian \cite{EdwardsPambuccian2026},
working in Tarski's language of pure betweenness and equidistance, derive
Tarski's five-segment axiom (itself equivalent to SAS) from an axiom
system whose distinguishing feature is a rigorous formalization of
Euclid's Fourth Postulate --- again a statement confined to right
triangles, and again without continuity. A rather different tradition, of
which \cite{SJCR2015} is a comprehensive recent example, replaces triangle
congruence axioms altogether with an axiom asserting the existence of a
well-behaved set of line reflections, from which congruence is
\emph{defined} via composition of reflections.

Two earlier papers of ours \cite{Volpe2026I,Volpe2026II} work within
$\MG^-$, the axiom system obtained from $\MG$ --- itself known as the
Hilbert plane \cite{Hartshorne2000} --- by removing the SAS axiom.
They ask which of the classical triangle congruence criteria --- SSS,
ASA, SAA --- can serve as an adequate substitute for SAS, once paired
with auxiliary principles. Those principles are chosen so as to be
provable in $\MG$ itself: their addition to $\MG^-$ never strays outside
$\MG$.
A central role in every reconstruction is played by [\textbf{RCT}],
necessary in all three cases. It is Hilbert's own Theorem 13
\cite[Theorem~13]{Hilbert1950}, comparing two angles by inclusion of the
ray-pair of one within that of the other. The first of the two papers
above showed that the full order theory of angles follows from it alone
--- trichotomy, transitivity, addition and subtraction --- and that this
already suffices for the reasoning used throughout absolute geometry.
That paper went on to show that ASA, together with
[\textbf{RCT}], suffices on its own; SSS requires, in addition to
[\textbf{RCT}], the hypotenuse-angle criterion [\textbf{HA}] for right
triangles; SAA requires, in addition to [\textbf{RCT}], the Pons Asinorum
[\textbf{PA}]. The second paper then showed that [\textbf{PA}] is in fact
a theorem on the SAA branch, not an independent hypothesis.

The present note pursues a complementary strategy: rather than deriving
SAS from SSS as in \cite[Theorem~3.11]{Volpe2026I}, we aim to recover
SAS directly from [\textbf{RCT}] and two further principles confined to
right triangles, without any appeal to SSS. These are:

\begin{quote}
[\textbf{HA}] \emph{(Hypotenuse-angle.)} Two right triangles with
congruent hypotenuses and congruent acute angles, the right angle lying
opposite the acute angle in question, are congruent.
\end{quote}

\begin{quote}
[\textbf{PE}] \emph{(Existence of the perpendicular.)} Given a line $l$
and a point $P$ not on $l$, there exists a line through $P$ perpendicular
to $l$.
\end{quote}

\noindent Neither principle presupposes anything beyond its own statement:
[\textbf{HA}] says nothing about triangles in general, and nothing
presupposes that a right angle exists anywhere else in the plane; a
perpendicular is, by definition, a line forming a right angle, so
[\textbf{PE}] already carries with it the existence of at least one right
angle, with no further axiom needed --- and, since [\textbf{HA}] alone
already forces its uniqueness \cite[Theorem~3.8]{Volpe2026I}, [\textbf{PE}]
asserts existence only, as a genuinely independent primitive. In
particular, we never assume that all right angles are congruent to one
another: no counterpart of Euclid's Fourth Postulate is needed. It is
this combination --- one principle confined to right triangles, one
asserting the existence of a single right angle, and one governing how
angles compare --- that we show suffices to reconstruct SAS in full,
without any appeal to SSS.

\section{The main theorem}

We state the theorem below without repeating its proof in full: the
argument is that of \cite[Theorem~3.11]{Volpe2026I}, to which we refer
the reader for every detail. Two figures \ref{thm:main_f} and \ref{thm:main_f2} accompany the proof sketch that
follows, to help visualize the construction.

\begin{figure}
\centering
\begin{tikzpicture}
\coordinate (B) at (0,0);
\coordinate (C) at (5,0.5);
\coordinate (A) at (1.75,2.75);

\coordinate (E) at (8,0);
\coordinate (F) at (13,0.5);
\coordinate (D) at (9.75,2.75);

\draw[very thick] (A) -- (B) -- (C) -- cycle;
\draw[very thick] (D) -- (E) -- (F) -- cycle;

\fill (A) circle (1.5pt) node[above] {$A$};
\fill (B) circle (1.5pt) node[left] {$B$};
\fill (C) circle (1.5pt) node[right] {$C$};

\fill (D) circle (1.5pt) node[above] {$D$};
\fill (E) circle (1.5pt) node[left] {$E$};
\fill (F) circle (1.5pt) node[right] {$F$};
\coordinate (P) at ($(B)!(A)!(C)$);
\draw[dashed] (A) -- (P);

\fill (P) circle (1.5pt) node[below] {$P$};
\draw[densely dotted] (A) -- (P);
\coordinate (Q) at ($(E)!(D)!(F)$);
\draw[dashed] (D) -- (Q);

\fill (Q) circle (1.5pt) node[below] {$\quad Q$};
\draw[densely dotted] (D) -- (Q);
\coordinate (Pprime) at ($(E)!0.75!(Q)$);
\fill (Pprime) circle (1.5pt) node[below] {$\quad P'$};
\coordinate (I) at ($(D)!(F)!(Pprime)$);
\coordinate (L) at ($(D)!(E)!(Pprime)$);

\draw[densely dotted] (F) -- (I);
\draw[densely dotted] (E) -- (L);

\fill (I) circle (1.5pt) node[below] {$I\quad$};
\fill (L) circle (1.5pt) node[below] {$L\quad$};

\draw[densely dotted] (D) -- (L);

%\draw ($(A)!0.5!(B)+(-0.12,0.08)$) -- ($(A)!0.5!(B)+(0.12,-0.08)$);
%\draw ($(D)!0.5!(E)+(-0.12,0.08)$) -- ($(D)!0.5!(E)+(0.12,-0.08)$);
%   \PerpMark{A}{B}{x}{h}{style}
\PerpMark{A}{B}{0.5}{0.05}{thin}
\PerpMark{D}{E}{0.5}{0.05}{thin}
%
%\draw ($(A)!0.47!(C)$) ++(-0.055,-0.075) -- ++(0.11,0.15);
%\draw ($(A)!0.53!(C)$) ++(-0.055,-0.075) -- ++(0.11,0.15);
%
%\draw ($(D)!0.47!(F)$) ++(-0.055,-0.075) -- ++(0.11,0.15);
%\draw ($(D)!0.53!(F)$) ++(-0.055,-0.075) -- ++(0.11,0.15);
\PerpMark{A}{C}{0.49}{0.05}{thin}
\PerpMark{A}{C}{0.51}{0.05}{thin}
\PerpMark{D}{F}{0.49}{0.05}{thin}
\PerpMark{D}{F}{0.51}{0.05}{thin}
%%%%
\pic[draw, angle radius=0.35cm] {angle=B--A--C};
\pic[draw, angle radius=0.35cm] {angle=E--D--F};

\pic[draw, angle radius=0.65cm] {angle=P--A--C};
\pic[draw, angle radius=0.70cm] {angle=P--A--C};

\pic[draw, angle radius=0.65cm] {angle=Pprime--D--F};
\pic[draw, angle radius=0.70cm] {angle=Pprime--D--F};
\end{tikzpicture}
\caption{Theorem \ref{thm:main}-Proof schematic, case $B-P-C$}\label{thm:main_f}
\end{figure}
\begin{figure}
\centering
\begin{tikzpicture}[scale=0.75]
\coordinate (B) at (0,0);
\coordinate (C) at (5,0.5);
\coordinate (A) at (-2.789,2.636);

\coordinate (E) at (8,0);
\coordinate (F) at (13,0.5);
\coordinate (D) at (5.211,2.636);

\draw[very thick] (A) -- (B) -- (C) -- cycle;
\draw[very thick] (D) -- (E) -- (F) -- cycle;

\fill (A) circle (1.5pt) node[above] {$A$};
\fill (B) circle (1.5pt) node[above right] {$B$};
\fill (C) circle (1.5pt) node[right] {$C$};

\fill (D) circle (1.5pt) node[above] {$D$};
\fill (E) circle (1.5pt) node[above right] {$E$};
\fill (F) circle (1.5pt) node[right] {$F$};
\coordinate (P) at ($(B)!(A)!(C)$);
\draw[dashed] (B) -- (P);

\fill (P) circle (1.5pt) node[below] {$P$};
\draw[densely dotted] (A) -- (P);
\coordinate (Q) at ($(E)!(D)!(F)$);
\draw[dashed] (E) -- (Q);

\fill (Q) circle (1.5pt) node[below] {$Q$};
\draw[densely dotted] (D) -- (Q);
\coordinate (Pprime) at ($(Q)!0.55!(E)$);
\fill (Pprime) circle (1.5pt) node[below] {$P'$};
\coordinate (I) at ($(D)!(F)!(Pprime)$);
\coordinate (L) at ($(D)!(E)!(Pprime)$);

\draw[densely dotted] (F) -- (I);
\draw[densely dotted] (E) -- (L);

\fill (I) circle (1.5pt) node[below] {$I$};
\fill (L) circle (1.5pt) node[above right] {$L$};

\draw[densely dotted] (D) -- (I);
\PerpMark{A}{B}{0.5}{0.05}{thin}
\PerpMark{D}{E}{0.5}{0.05}{thin}
\PerpMark{A}{C}{0.495}{0.025}{thin}
\PerpMark{A}{C}{0.505}{0.025}{thin}
\PerpMark{D}{F}{0.495}{0.025}{thin}
\PerpMark{D}{F}{0.505}{0.025}{thin}
%\draw ($(A)!0.5!(B)+(-0.12,0.08)$) -- ($(A)!0.5!(B)+(0.12,-0.08)$);
%\draw ($(D)!0.5!(E)+(-0.12,0.08)$) -- ($(D)!0.5!(E)+(0.12,-0.08)$);
%
%\draw ($(A)!0.47!(C)$) ++(-0.055,-0.075) -- ++(0.11,0.15);
%\draw ($(A)!0.53!(C)$) ++(-0.055,-0.075) -- ++(0.11,0.15);
%
%\draw ($(D)!0.47!(F)$) ++(-0.055,-0.075) -- ++(0.11,0.15);
%\draw ($(D)!0.53!(F)$) ++(-0.055,-0.075) -- ++(0.11,0.15);

\pic[draw, angle radius=0.35cm] {angle=B--A--C};
\pic[draw, angle radius=0.35cm] {angle=E--D--F};

\pic[draw, angle radius=0.65cm] {angle=P--A--C};
\pic[draw, angle radius=0.70cm] {angle=P--A--C};

\pic[draw, angle radius=0.65cm] {angle=Pprime--D--F};
\pic[draw, angle radius=0.70cm] {angle=Pprime--D--F};
\end{tikzpicture}
\caption{Theorem \ref{thm:main}-Proof schematic, case $P-B-C$}\label{thm:main_f2}
\end{figure}

\begin{thm}\label{thm:main}
If [\textbf{HA}], [\textbf{RCT}], and [\textbf{PE}] hold in $\MG^-$, then
SAS holds.
\end{thm}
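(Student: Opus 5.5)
Following the figures, we will take triangles $ABC$ and $DEF$ with $AB\cong DE$, $AC\cong DF$, $\angle BAC\cong\angle EDF$, and aim to show $BC\cong EF$ together with the remaining angle congruences. By the results of \cite{Volpe2026I}, [\textbf{RCT}] already gives the full order theory of angles: trichotomy, transitivity, addition and subtraction. Moreover, [\textbf{HA}] gives uniqueness of the perpendicular \cite[Theorem~3.8]{Volpe2026I}. It therefore suffices to prove $\angle ABC\cong\angle DEF$; the rest then follows by transporting $EF$ along ray $BC$ and applying the same argument with the roles of the sides exchanged, exactly as in \cite[Theorem~3.11]{Volpe2026I}.

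\emph{Construction.} By [\textbf{PE}] we drop perpendiculars $AP$ to line $BC$ and $DQ$ to line $EF$. We then split into cases according to the position of $P$: $B{-}P{-}C$ (Figure~\ref{thm:main_f}), $P{-}B{-}C$ (Figure~\ref{thm:main_f2}), $P=B$ or $P=C$, and the remaining orders. The degenerate cases $P=B$ or $P=C$ reduce to a direct [\textbf{HA}] argument once we know where the corresponding foot lies.
\begin{itemize}
\item In the main case, the right triangle $APC$ has hypotenuse $AC$ and acute angle $\angle PAC$. We transport $\angle PAC$ to ray $DF$ at $D$ on the appropriate side, cutting line $EF$ (or its extension) at a point $P'$.
\item We then drop perpendiculars from $F$ and from $E$ onto line $DP'$, with feet $I$ and $L$.
\item [\textbf{HA}] applied to $APC$ and $DIF$ (hypotenuses $AC\cong DF$, angles $\angle PAC\cong\angle IDF$) gives $DI\cong AP$ and $FI\cong CP$.
\item Angle subtraction via [\textbf{RCT}] gives $\angle BAP\cong\angle EDP'$ from $\angle BAC\cong\angle EDF$. [\textbf{HA}] applied to $ABP$ and $DEL$ then gives $DL\cong AP$.
\end{itemize}

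Since $DI\cong DL$ with $I,L$ on the same ray from $D$, segment uniqueness yields $I=L$. Then both $E$ and $F$ lie on the perpendicular to $DP'$ at $I$. Uniqueness of the perpendicular at a point, which follows from the angle-order theory, forces $I=L=P'$, so line $DP'$ is the perpendicular from $D$ to $EF$; that is, $P'=Q$. Consequently $EQ\cong BP$, $QF\cong PC$, and the right angles at $P$ and $Q$ match. Segment addition or subtraction gives $BC\cong EF$, and $\angle ABC\cong\angle DEF$ follows from the congruence of $ABP$ and $DEQ$ via [\textbf{HA}].

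\emph{Expected obstacle.} The main difficulty is the betweenness bookkeeping. We must ensure that the angle transported at $D$ falls inside $\angle EDF$ exactly when $\angle PAC$ lies inside $\angle BAC$. We must also ensure that $I$ and $L$ land on the same ray from $D$, and that in the case $P{-}B{-}C$ angle addition rather than subtraction is used. We would handle this uniformly using the ray-inclusion comparison from [\textbf{RCT}]. We would also check that neither $\angle PAC$ nor $\angle BAP$ is right, since otherwise [\textbf{HA}] does not apply; such configurations are the degenerate cases treated separately.
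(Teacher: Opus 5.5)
There is a genuine gap, in the $P{-}B{-}C$ branch. Your construction otherwise matches the paper's: feet $P,Q$ from [\textbf{PE}], a copy of $\angle PAC$ transported to $D$ giving $P'$, feet $I,L$ of $F,E$ on $DP'$, [\textbf{HA}] applied twice, then $I=L$ and uniqueness of perpendiculars. The difference is that you replace the paper's case split on $\angle PAC\equiv\angle QDF$ versus $\angle PAC\not\equiv\angle QDF$ with a split on the position of $P$. In doing so you lose what that split provides, namely a guarantee that $P'$ exists.

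In the case $B{-}P{-}C$ there is no problem. Since $\angle PAC<\angle BAC\equiv\angle EDF$, the transported ray lies inside $\angle EDF$, and crossbar gives $P'$ on segment $EF$ (you do not even need $Q$ there).

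In the case $P{-}B{-}C$, however, $\angle PAC>\angle EDF$. The transported ray lies outside $\angle EDF$; it is ray $DE$ that lies inside the transported angle. Crossbar therefore does not apply, and your claim that the ray cuts ``line $EF$ (or its extension)'' is unsupported. Nothing in $\MG^-$ (no SAS, no parallel axiom) forces a ray from $D$ outside $\angle EDF$ to meet line $EF$; in a hyperbolic plane many such rays miss it. This issue is also not among the obstacles you list.

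The paper's split is what supplies the missing landmark. With $Q$ in hand, the transported angle is compared with $\angle QDF$ rather than only with $\angle EDF$. Crossbar can then be applied inside an angle bounded by the perpendicular $DQ$, which is why $P'$ sits between $Q$ and $E$ in Figure~\ref{thm:main_f2}. The congruent case is handled by [\textbf{HA}] directly, and the incongruent case is refuted by the $I=L$ argument.

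Alternatively, you can keep your direct organization but stop presupposing $P'$:
\begin{itemize}
\item Drop $I$ and $L$ onto the transported line $m$ itself.
\item Once $DI\equiv AP\equiv DL$ gives $I=L$, note that the rays $IE$ and $IF$ lie on the same side of $m$, because $DE$ is inside the transported angle.
\item Both rays make right angles with $m$, so they coincide by the order theory from [\textbf{RCT}].
\item Hence $I$ lies on line $EF$, $DI\perp EF$, and $I=Q$ by uniqueness of the perpendicular from $D$.
\end{itemize}
In this version the intersection point is an output of the argument rather than an input. Either repair is needed before the remaining bookkeeping in the $P{-}B{-}C$ branch (feet on the correct ray, angle subtraction) can go through.
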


\begin{proof}[Proof sketch]
Let $\triangle ABC$ and $\triangle DEF$ be given with $AB\equiv DE$,
$AC\equiv DF$, and $\angle BAC\equiv\angle EDF$. By [\textbf{PE}], let $P$
be the foot of the perpendicular from $A$ to line $BC$, and $Q$ the foot
of the perpendicular from $D$ to line $EF$. From this point, the
argument proceeds exactly as in the proof of
\cite[Theorem~3.11]{Volpe2026I}: the case split on
$\angle PAC\equiv\angle QDF$ versus $\angle PAC\not\equiv\angle QDF$, the
construction of $P'$, and the closing argument via [\textbf{HA}] applied
twice, use only [\textbf{HA}], [\textbf{RCT}], and the uniqueness of the
perpendicular already noted above --- never SSS or SAA themselves. Every
step transfers verbatim once $P$ and $Q$ are secured by [\textbf{PE}],
in place of the branch-specific existence argument (Theorem~3.10 there)
on which \cite[Theorem~3.11]{Volpe2026I} otherwise relies.
\end{proof}

\begin{cor}\label{cor:consequences}
Under the hypotheses of Theorem~\ref{thm:main}, SSS and
SAA also hold, as standard consequences of SAS in
$\MG^-$ \cite{Greenberg1993}.
\end{cor}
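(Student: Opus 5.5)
Once Theorem~\ref{thm:main} provides SAS in $\MG^-$, together with [\textbf{RCT}] (so the full order theory of angles from \cite{Volpe2026I} is available), we are working in the Hilbert plane $\MG$. The plan is therefore to rerun the classical Hilbert--Greenberg derivations of SSS and SAA from SAS and check at each step that only $\MG^-$ axioms, SAS, and angle-order facts from [\textbf{RCT}] are used. The intermediate results are: Pons Asinorum and its converse; ASA; the exterior angle theorem; and the larger-side/larger-angle theorem.

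\emph{Base theorems.} For Pons Asinorum, apply SAS to $\triangle ABC$ and $\triangle ACB$ when $AB\equiv AC$. For ASA, follow the standard route: if $\triangle ABC$ and $\triangle DEF$ agree in angle--side--angle, lay off $E'$ on ray $DE$ with $DE'\equiv AB$. By SAS, $\triangle ABC\equiv\triangle DE'F$, so $\angle DFE'\equiv\angle DFE$. Uniqueness of angle transport on a given side of $FD$ then forces ray $FE'$ to equal ray $FE$, hence $E'=E$. This uniqueness is the only point where the order theory is used.

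\emph{SSS.} Let $AB\equiv DE$, $BC\equiv EF$, $CA\equiv FD$. Transport $\angle BAC$ to ray $AC$ on the side of $AC$ opposite $B$, and take $G$ on the new ray with $AG\equiv DE$. By SAS, $\triangle AGC\equiv\triangle DEF$. Then $AB\equiv AG$ and $CB\equiv CG$, and segment $BG$ meets line $AC$. Split into cases according to where it meets: strictly between $A$ and $C$, at $A$ or $C$, or outside segment $AC$. In each case, Pons Asinorum gives base-angle equalities; angle addition or subtraction via [\textbf{RCT}] then gives $\angle ABC\equiv\angle AGC$, and SAS closes the argument.

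\emph{SAA.} Suppose the two triangles agree in $\angle A$, $\angle B$, and side $BC$. Lay off $D'$ on ray $ED$ with $ED'\equiv BA$. By SAS, $\triangle ABC\equiv\triangle D'EF$, so $\angle ED'F\equiv\angle EDF$. If $D'\neq D$, one of these angles is an exterior angle of $\triangle DD'F$ and the other is a remote interior angle, contradicting the exterior angle theorem. The exterior angle theorem itself is proved from SAS by Euclid's midpoint-doubling argument, using only betweenness and the order of angles.

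\emph{Main obstacle.} I expect the delicate step to be the case analysis in SSS. Its degenerate cases (the segment $BG$ passing through $A$ or $C$, or meeting the line outside the segment) require angle subtraction and the fact that supplements of congruent angles are congruent. I would derive the latter from SAS by the usual argument and check that [\textbf{RCT}]'s order theory from \cite{Volpe2026I} covers the subtraction. All remaining steps appear verbatim in \cite{Greenberg1993}, so the corollary follows.
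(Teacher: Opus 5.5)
Your route is the paper's. $\MG^-$ together with SAS is exactly $\MG$, so SSS and SAA are the standard Hilbert-plane theorems, and the paper's proof is nothing more than that observation with a pointer to Greenberg. Your outline of those derivations (Pons Asinorum, ASA, SSS by the standard argument on the far side of $AC$, SAA via the exterior angle theorem) is the right one, and [\textbf{RCT}] is not even needed once SAS is available.

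One step fails as written, though. In the SSS argument you must transport $\angle EDF$, not $\angle BAC$, to the far side of $AC$. With $\angle GAC\equiv\angle BAC$ and $AG\equiv DE\equiv AB$, SAS gives only $\triangle AGC\equiv\triangle ABC$. The asserted $\triangle AGC\equiv\triangle DEF$ would then need $\angle BAC\equiv\angle EDF$, which is the very conclusion SSS is meant to deliver, so the argument becomes circular. Taking $\angle GAC\equiv\angle EDF$ instead gives $\triangle AGC\equiv\triangle DEF$ by SAS, hence $AG\equiv AB$ and $CG\equiv CB$. Your case analysis then yields $\triangle ABC\equiv\triangle AGC\equiv\triangle DEF$.

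There are also some smaller points:
\begin{itemize}
\item Euclid's midpoint-doubling proof of the exterior angle theorem does not use ``only betweenness and the order of angles''. It presupposes that every segment has a midpoint, which Greenberg proves only later and by means of SAA itself. Either use Greenberg's own proof via the alternate interior angle theorem, or obtain midpoints independently first: build an isosceles triangle on the base, then bisect the apex angle using the SSS you already have.
\item The uniqueness of angle transport in your ASA step is Hilbert's axiom III.4 (Greenberg's C4), already part of $\MG^-$, not a consequence of [\textbf{RCT}].
\item The cases where $BG$ passes through $A$ or $C$ need no subtraction. A single application of Pons Asinorum gives $\angle ABC\equiv\angle AGC$ directly, so only the case where $BG$ meets the line outside segment $AC$ needs angle subtraction.
\end{itemize}
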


\begin{rem}[Consistency and necessity of the hypotheses]\label{rem:meta}
The ordinary Euclidean plane satisfies [\textbf{RCT}], [\textbf{PE}], and
[\textbf{HA}] (indeed all of $\MG$), so the hypotheses of
Theorem~\ref{thm:main} are jointly consistent.

More is true: [\textbf{HA}] is not derivable from [\textbf{RCT}] and
[\textbf{PE}] alone. Recall from \cite{Volpe2026I} the model
$\mathbb{E}_H^2$, based on the counterexample Hilbert himself used
\cite[\S~11]{Hilbert1950}. The model $\mathbb{E}_H^2$ satisfies
[\textbf{RCT}] while failing both [\textbf{HA}] and SAS. We show that it
satisfies [\textbf{PE}] as well. Like [\textbf{RCT}], [\textbf{PE}] is a
statement about angles alone --- it asserts the existence of a right
angle, nothing about lengths --- and angle congruence in $\mathbb{E}_H^2$
is unchanged from the ordinary Euclidean plane; only segment length is
redefined there. So $\mathbb{E}_H^2$ satisfies [\textbf{PE}] by the same
token it satisfies [\textbf{RCT}]. By soundness,
\begin{equation*}
  [\textbf{RCT}],\ [\textbf{PE}]\ \not\vdash\ [\textbf{HA}],
\end{equation*}
and, independently, $[\textbf{RCT}],\ [\textbf{PE}]\not\vdash\text{SAS}$.
In this precise sense, [\textbf{HA}] is a genuinely necessary hypothesis
in Theorem~\ref{thm:main}, giving this reconstruction the same kind of
security, via the same model, as the two earlier reconstructions.
\end{rem}

\section{Concluding remarks}\label{sec:concl}

[\textbf{HA}] and [\textbf{PE}] compare favourably, on grounds of
intuitiveness, with the alternatives surveyed in the introduction. Unlike
H\"ahl and Peters' (CT), which postulates triangle transport directly
and so remains close in strength to SAS itself, [\textbf{HA}] is
confined to the narrow and visually immediate configuration of two right
triangles compared by hypotenuse and acute angle --- arguably closer to
ordinary geometric intuition than a general transport axiom. Unlike
Johnson's R1 and Edwards-Pambuccian's R13, both of which compare right
triangles via their \emph{legs}, [\textbf{HA}] compares hypotenuse and
angle, closer in character to Euclid's own Fourth Postulate.

The sense in which each of these alternatives claims to be ``more
intuitive'' than SAS varies, and we make no attempt to adjudicate among
them. H\"ahl and Peters ground intuitiveness in fidelity to Euclid's own
method of superposition \cite{HaehlPeters2022}; Edwards and Pambuccian
ground it in provability without continuity, without invoking the term
\cite{EdwardsPambuccian2026}; Johnson grounds it in the simplicity of the
axioms themselves and their proximity to Euclid's spirit
\cite{Johnson2025}; Specht et al.\ tie it more generally to conformity
with intuitive expectation throughout the development, without a direct
comparison to SAS \cite{SJCR2015}.

Measured against these criteria in turn, [\textbf{HA}] and [\textbf{PE}]
fare unevenly. On provability without continuity, the present
reconstruction succeeds unconditionally: the entire argument takes place
within $\MG^-$, with no continuity assumption anywhere. On simplicity and
right-angle confinement, it compares closely to Johnson's and
Edwards-Pambuccian's own criteria, at the cost of one further principle,
[\textbf{RCT}], beyond their two. On fidelity to Euclid's method of
superposition, a comparison would require entering into the historical
origins of the geometric concepts involved --- in particular, whether
comparing a hypotenuse and an angle already implicitly enacts a form of
superposition --- a question we are not equipped to adjudicate here.

\end{document}